\documentclass{article}

\usepackage{epsfig,graphicx}
\usepackage{indentfirst}
\usepackage{amsmath,amsfonts,amsthm,amssymb}
\usepackage{mathrsfs}
\usepackage{amscd}
\usepackage{slashbox}

\DeclareMathAlphabet{\mathsfsl}{OT1}{cmss}{m}{sl}

\newtheorem{thm}{Theorem}[section]
\newtheorem{lem}[thm]{Lemma}
\newtheorem{cor}[thm]{Corollary}
\newtheorem{prop}[thm]{Proposition}

\theoremstyle{definition}

\newcommand{\HF}{\widehat{HF}}

\newcommand{\Z}{\mathbb{Z}}

\newcommand{\Q}{\mathbb{Q}}

\newcommand{\spinc}{${\rm Spin}^c$ }

\newcommand\CFKi{CFK^\infty}
\newcommand\bbF{\mathbb{F}}

\newcommand{\spin}{\mathrm{Spin}^c}

\begin{document}

\title{A cabling formula for $\nu^+$ invariant}

\author{{\large Zhongtao WU}\\{\normalsize Department of Mathematics, The Chinese Universiy of Hong Kong}\\
{\normalsize Lady Shaw Building, Shatin, Hong Kong}\\{\small\it Emai\/l\/:\quad\rm ztwu@math.cuhk.edu.hk}}

\date{}
\maketitle
\begin{abstract}
We prove a cabling formula for the concordance invariant $\nu^+$, defined by the author and Hom in \cite{HomWu}. This gives rise to a simple and effective $4$-ball genus bound for many cable knots.  

\end{abstract}

\section{Introduction}
The invariant $\nu^+$, or equivalently $\nu^-$, is a concordance invariant defined by the author and Hom \cite{HomWu}, and by Ozsv\'ath-Stipsicz-Szab\'o \cite{OSS} based on Rasmussen's local $h$ invariant \cite{RasThesis}. It gives a lower bound on the $4$-ball genus of knots and can get arbitrarily better than the bounds from Ozsv\'ath-Szab\'o $\tau$ invariant.  In this paper, we prove a cabling formula for $\nu^+$.  The main result is:

\begin{thm}\label{main}
For $p,q>0$ and the cable knot $K_{p,q}$, we have $$\nu^+(K_{p,q})=p\nu^+(K)+\frac{(p-1)(q-1)}{2}$$ when $q\geq (2\nu^+(K)-1)p-1$.
\end{thm}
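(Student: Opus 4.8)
The plan is to reduce Theorem~\ref{main} to a statement about the local $h$-invariants (equivalently the $V_s$-invariants) of $K_{p,q}$, and then to compute those by means of the Dehn surgery formula, exploiting the fact that one distinguished surgery on a cable knot is re-expressible in terms of a surgery on the companion $K$.

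First I would recall that $\nu^+(K)=\min\{s\ge 0:V_s(K)=0\}$, where $\{V_s(K)\}_{s\ge 0}$ is a non-increasing sequence of non-negative integers with $V_s(K)-V_{s+1}(K)\in\{0,1\}$ and $V_s(K)=0$ for $s\ge g(K)$; the same holds for $K_{p,q}$. Setting $N:=p\,\nu^+(K)+\tfrac{(p-1)(q-1)}{2}$, it then suffices by monotonicity to show that $V_N(K_{p,q})=0$ while $V_{N-1}(K_{p,q})>0$ (the second condition being vacuous when $N=0$), since this forces $\nu^+(K_{p,q})=N$.

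The main geometric input is the classical fact that $pq$-surgery on $K_{p,q}$ is reducible, $S^3_{pq}(K_{p,q})\cong L(p,q)\,\#\,S^3_{q/p}(K)$ (for $K$ the unknot both sides are $L(p,q)\#L(q,p)$, by Moser's computation of surgeries on torus knots). Feeding this into the surgery formulas of Ozsv\'ath--Szab\'o I would proceed as follows: additivity of the correction terms under connected sum relates $d\big(S^3_{pq}(K_{p,q}),[t]\big)$ to $d\big(S^3_{q/p}(K),[i]\big)$ plus a lens-space term; the integer surgery formula recovers $\max\{V_t(K_{p,q}),V_{pq-t}(K_{p,q})\}$ from the left-hand side; the rational surgery formula, in its $d$-invariant refinement (Ni--Wu), recovers $V_{\lfloor i/p\rfloor}(K)$ (up to a symmetric term) from $d\big(S^3_{q/p}(K),[i]\big)$; and specializing the whole identity to the unknot isolates the purely lens-space contribution $d(L(pq,1),[t])=d(L(p,q),[\cdot])+d(L(q,p),[\cdot])$. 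Subtracting, one is left with an identity of the shape $\max\{V_t(K_{p,q}),V_{pq-t}(K_{p,q})\}=V_{\lfloor i(t)/p\rfloor}(K)$ valid for spin$^c$ structures $t$ in a suitable window, where $i(t)\in\Z/q$ is the class corresponding to $t\in\Z/pq$ under the Chinese remainder splitting $\Z/pq\cong\Z/p\oplus\Z/q$. It then remains to trace the correspondence $t\mapsto i(t)$ (this is where the shift $\tfrac{(p-1)(q-1)}{2}$ is produced), to confirm that $t=N$ and $t=N-1$ lie in the window exactly when $q\ge(2\nu^+(K)-1)p-1$, and to verify the arithmetic identities $\lfloor i(N)/p\rfloor=\nu^+(K)$ and $\lfloor i(N-1)/p\rfloor=\nu^+(K)-1$.

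The hard part, I expect, is this last point: determining precisely the range of spin$^c$ structures for which the combined surgery formula returns $V_N(K_{p,q})$ itself rather than a maximum with $V_{pq-N}(K_{p,q})$ or a quantity governed by the genus $\tfrac{(p-1)(q-1)}{2}$ of the torus-knot summand, and checking that this range is cut out by the stated inequality. When $N$ sits just past the naive half-way point $pq/2$, one can still extract the vanishing $V_N(K_{p,q})=0$ from $V_{pq-N}(K_{p,q})=0$ using monotonicity of $V_s$, and this is what buys the extra $-1$ in the hypothesis over the naive bound; if the matching lower bound $\nu^+(K_{p,q})\ge N$ needs shoring up in those boundary cases, one can invoke $\nu^+(K_{p,q})\ge\tau(K_{p,q})$ together with Hom's cabling formula for $\tau$. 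The upper bound $\nu^+(K_{p,q})\le N$ should in fact fall out of the same surgery analysis (or from a genus estimate for the obvious cable cobordism) with no hypothesis on $q$, so that the content of $q\ge(2\nu^+(K)-1)p-1$ is precisely that it forces the reverse inequality.
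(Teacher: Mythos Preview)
Your plan is essentially the paper's own: reducible $pq$-surgery $S^3_{pq}(K_{p,q})\cong S^3_{q/p}(K)\#L(p,q)$, the Ni--Wu correction-term formula on both sides, subtraction of the unknot case to produce an identity relating $V_i(K_{p,q})$ to $V_i(T_{p,q})$ and $V_{\lfloor \phi_1(i)/p\rfloor}(K)$, and then reading off $\nu^+$. Two remarks on where your sketch and the paper diverge. First, what you call ``trace the correspondence $t\mapsto i(t)$'' is the technical heart of the paper (its Lemma~4.1): the projection of $\mathrm{Spin}^c$ structures is \emph{not} literally the Chinese-remainder map but an affine map $\phi_1(i)\equiv i-\tfrac{(p-1)(q-1)}{2}\pmod q$; the paper pins down the slope geometrically (the dual knot of the cable projects to the dual knot of $K$) and the constant via conjugation symmetry together with a contradiction argument invoking the Lee--Lipshitz identity for $d$-invariants of lens spaces. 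You correctly anticipate the shift but should not expect CRT alone to hand it to you. Second, your discussion of the borderline case $N$ just past $pq/2$ and the appeal to monotonicity (or to Hom's $\tau$-formula) is actually more careful than the paper's own proof, which tacitly works under $q\ge(2\nu^+(K)-1)p+1$ (so that $N\le pq/2$) even though the stated hypothesis has $-1$; your monotonicity remark is exactly what is needed to close that gap.
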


\noindent
As an application of the cabling formula, one can use $\nu^+$ to bound the $4$-ball genus of cable knots; in certain special cases, $\nu^+$ determines the $4$-ball genus precisely.

\begin{cor}\label{fourball}
Suppose $K$ is a knot such that $\nu^+(K)=g_4(K)=n$.  Then $$\nu^+(K_{p,q})=g_4(K_{p,q})=pn+\frac{(p-1)(q-1)}{2}$$ for $q\geq (2n-1)p-1$.
\end{cor}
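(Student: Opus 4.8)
The plan is to pin down $g_4(K_{p,q})$ by squeezing it between $\nu^+(K_{p,q})$ from below and an explicit slice surface from above, and then to evaluate $\nu^+(K_{p,q})$ by Theorem~\ref{main}. For the lower bound one uses only the basic property that $\nu^+(L)\le g_4(L)$ for every knot $L$, established in \cite{HomWu} and \cite{OSS}; taking $L=K_{p,q}$ gives $g_4(K_{p,q})\ge\nu^+(K_{p,q})$. Since $\nu^+(K)=n$, the hypothesis $q\ge(2n-1)p-1$ is precisely the range in which Theorem~\ref{main} is valid, so $\nu^+(K_{p,q})=pn+\frac{(p-1)(q-1)}{2}$. (Coprimality of $p$ and $q$, implicit in $K_{p,q}$ being a knot, is what makes this an integer.)

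The remaining point is the matching upper bound $g_4(K_{p,q})\le pn+\frac{(p-1)(q-1)}{2}$, and here I would exhibit a surface in $B^4$ directly. Choose a properly embedded genus-$n$ surface $\Sigma\subset B^4$ with $\partial\Sigma=K$. Its normal bundle is trivial, and because $H_2(B^4,S^3)=0$ the framing induced on $K$ by any trivialization is the Seifert framing; take $p$ disjoint parallel copies $\Sigma_1,\dots,\Sigma_p$ along this $0$-framing, with boundary the $0$-framed $p$-component cable link $L$. In the cabling solid torus the pattern of $K_{p,q}$ is the closure of the positive braid $(\sigma_1\cdots\sigma_{p-1})^q$; resolving its $q(p-1)$ crossings one by one gives a cobordism, within the solid torus times an interval, from this pattern to the $p$ parallel cores. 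Satelliting that cobordism by $K$ along the $0$-framing, and capping the $L$-end off with $\Sigma_1\sqcup\cdots\sqcup\Sigma_p$, produces a connected surface $F\subset B^4$ with $\partial F=K_{p,q}$; the Euler characteristic count $\chi(F)=p(1-2n)-q(p-1)$ gives $g(F)=pn+\frac{(p-1)(q-1)}{2}$.

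Putting the two bounds together, $pn+\frac{(p-1)(q-1)}{2}=\nu^+(K_{p,q})\le g_4(K_{p,q})\le g(F)=pn+\frac{(p-1)(q-1)}{2}$, so all three quantities coincide, which is the claim. I expect essentially all the difficulty to be contained in Theorem~\ref{main}; within the corollary the only step requiring genuine care is the upper-bound construction — in particular, verifying that the satellite operation along the $0$-framing sends the solid-torus cobordism to an honest cobordism in $S^3\times[0,1]$ with unchanged Euler characteristic, and keeping the $(p-1)(q-1)/2$ bookkeeping straight so that the constructed $F$ realizes exactly the genus predicted by $\nu^+$.
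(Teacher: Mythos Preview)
Your proposal is correct and follows essentially the same route as the paper: squeeze $g_4(K_{p,q})$ between $\nu^+(K_{p,q})$ (computed via Theorem~\ref{main}) and an explicit slice surface of genus $pn+\frac{(p-1)(q-1)}{2}$. The paper's upper-bound construction is stated more tersely as ``$p$ parallel copies of a slice surface for $K$ together with $(p-1)q$ half-twisted bands,'' which is exactly your crossing-resolution cobordism in the pattern solid torus capped off by $\Sigma_1\sqcup\cdots\sqcup\Sigma_p$.
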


\noindent
Take $K=T_{2,5} \# 2 T_{2,3} \# -T_{2,3; 2,5}$, for example.  It is known that $g_4(K)=\nu^+(K)=2$.  Using Corollary \ref{fourball}, we can determine the $4$-ball genus of any cable knots $K_{p,q}$ when $q\geq 3p-1$.  This generalizes \cite[Proposition 3.5]{HomWu}.

Regarding the behavior of $\tau$ invariant under knot cabling, the question was well-studied \cite{HeddencablingII}\cite{Petkova}\cite{VC10}, culminating in Hom's explicit formula in \cite{Homcables}.  In contrast to the rather explicit computational approach used in these papers, our method of study is based on a special relationship between $\nu^+$ and surgery of knots, and thus avoids the potential difficulty associated to the computation of the knot Floer complex $CFK^\infty(K_{p,q})$. 

In order to carry out our proposed method, we need to compute the correction terms on both sides of the reducible surgery (\ref{redeq}), which we shall describe in Section 3.  The most technical part of the argument is to identify the projection map of the \spinc structures in the reducible surgery, and this is discussed in Section 4.   The proof of the main theorem follows in Section 5. 

\vspace{5pt}\noindent{\bf Acknowledgements.} We like to thank Yi Ni for a helpful discussion. The author was partially supported by a grant from the Research Grants Council of the Hong Kong Special Administrative Region, China (Project No. CUHK 2191056).

\section{The invariant $\nu^+$}

In this section, we review the definition and properties of the $\nu^+$ invariant from \cite{HomWu} and relevant backgrounds in Heegaard Floer theory. Heegaard Floer homology is a collection of invariants for closed three-manifolds $Y$ in the form of homology theories $HF^\infty(Y)$, $HF^+(Y)$, $HF^-(Y)$, $\HF(Y)$ and $HF_{\mathrm{red}}(Y)$.  In  Ozsv\'ath-Szab\'o \cite{OSknots} and Rasmussen \cite{RasThesis}, a closely related invariant is defined for null-homologous knots $K \subset Y$, taking the form of an induced filtration on the Heegaard Floer complex of $Y$.  In particular, let $\CFKi(K)$ denote the knot Floer complex of $K \in S^3$.   Consider the quotient complexes
\[ A^+_k = C\{ \max \{i, j-k\}  \geq 0\}  \quad \textup{ and } \quad B^+ = C\{ i \geq 0\} \]
where $i$ and $j$ refer to the two filtrations. The complex $B^+$ is isomorphic to $CF^+(S^3)$. Associated to each $k$, there is a graded, module map
\[ v^+_k: A^+_k \rightarrow B^+ \]
defined by projection and another map
\[ h^+_k: A^+_k \rightarrow B^+ \]
defined by projection to $C\{j \geq k\}$, followed by shifting to $C\{j \geq 0\}$ via the $U$-action, and concluding with a chain homotopy equivalence between $C\{j \geq 0\}$ and $C\{i \geq 0\}$.
Finally, the $\nu^+$ invariant is defined as
\begin{equation}\label{def1}
\nu^+(K):=\mathrm{min} \{k\in \Z \,|\, v^+_k: A^+_k \rightarrow CF^+(S^3), \;\; v^+_k(1)=1 \}.
\end{equation}
Here, $1$ denotes the lowest graded generator of the non-torsion class in the homology of the complex, and we abuse our notations by identifying $A^+_k$ and $CF^+(S^3)$ with their homologies.  

Recall that in the large $N$ surgery, $v^+_k$ corresponds to the maps induced on $HF^+$ by the two handle cobordism from $S^3_N(K)$ to $S^3$ \cite[Theorem 4.4]{OSknots}. This allows one to extract $4$-ball genus bound from functorial properties of the cobordism map. We list below some additional properties of $\nu^+$, all of which can be found in \cite{HomWu}.
\begin{itemize}
\item[(a)]
$\nu^+(K)$ is a smooth concordance invariant, taking nonnegative integer value.
\item[(b)]
$\tau(K)\leq \nu^+(K) \leq g_4(K)$.  (See \cite{OSz4Genus} for the definition of $\tau$)
\item[(c)]
For a quasi-alternating knot $K$, $\nu^+(K)=\left\{
\begin{array}{ll}
0&\text{if } \sigma(K)\ge0,\\
-\frac{\sigma(K)}{2}&\text{if }\sigma(K)<0.
\end{array}
\right.$

\item[(d)]
For a strongly quasi-positive knot $K$,
\[ \nu^+(K) = \tau(K) = g_4(K) = g(K). \]

\end{itemize}

For a rational homology $3$--sphere $Y$ with a Spin$^c$ structure $\mathfrak s$, $HF^+(Y,\mathfrak s)$ is the direct sum of two groups: the first group is the image of $HF^\infty(Y,\mathfrak s)\cong\mathbb \bbF[U,U^{-1}]$ in $HF^+(Y,\mathfrak s)$,
which is isomorphic to $\mathcal T^+=\bbF[U,U^{-1}]/U\bbF[U]$, and its minimal absolute  $\mathbb{Q}$--grading is an invariant of $(Y,\mathfrak s)$, denoted by $d(Y,\mathfrak s)$, the {\it correction term} \cite{OSabsgr}; the second group is the quotient modulo the above image and is denoted by $HF_{\mathrm{red}}(Y,\mathfrak s)$.  Altogether, we have $$HF^+(Y,\mathfrak s)=\mathcal{T}^+\oplus HF_{\mathrm{red}}(Y,\mathfrak s).$$
Using this splitting, we can associate for each integer $k$ and the knot $K$ a non-negative integer $V_k(K)$ that equals the $U$-exponent of $v_k^+$ restricted to\footnote{Again, we abuse the notations by identifying $A_K^+$ with its homology} $\mathcal T^+ \in A_k^+$. This sequence of $\{V_k\}$ is non-increasing, i.e.,  $V_k\geq V_{k+1}$, and stabilizes at 0 for large $k$.  Observe that the minimum $k$ for which $V_k=0$ is the same as $\nu^+(K)$ defined in (\ref{def1}).  This enables us to reinterpret the $\nu^+$ invariant in the following more concise way.
\begin{equation}\label{def2}
\nu^+(K)=\mathrm{min} \{k\in \Z \,|\, V_k=0 \},
\end{equation}

In addition, the sequence $\{V_k\}$ completely determines the correction terms of manifolds obtained from knot surgery.  This can be seen from the surgery formula \cite[Proposition 1.6]{NiWu}.
\begin{equation}\label{Corr}
d(S^3_{p/q}(K),i)=d(L(p,q),i)-2\max\{V_{\lfloor\frac{i}q\rfloor},V_{\lfloor\frac{p+q-1-i}q\rfloor}\}.
\end{equation}
for $p,q>0$ and $0\le i\le p-1$.  We will explain this formula in greater detail in Section 4.

We conclude this section by mentioning that an invariant equivalent to $\nu^+$, denoted $\nu^-$ by Ozsv\'ath and Szab\'o, was formulated in terms of the chain complex $CFK^-$ in \cite{OSS}.  That invariant played an important role to establish a $4$-ball genus bound for a one-parameter concordance invariant $\Upsilon_K(t)$ defined in the same reference.  For our purpose in the rest of the paper, we will not elaborate on that definition. 

\section{Reducible surgery on cable knots}

Recall that the $(p,q)$ cable of a knot $K$, denoted $K_{p,q}$, is a knot supported on the boundary of a tubular neighborhood of $K$ with slope $p/q$ with respect to the standard framing of this torus.  A well-known fact in low-dimensional topology states that the $pq$-surgery on $K_{p,q}$ results in a reducible $3$-manifold.

\begin{prop}\label{redsurg}
\begin{equation}\label{redeq}
S^3_{pq}(K_{p,q})\cong S^3_{q/p}(K) \# L(p,q)
\end{equation}
\end{prop}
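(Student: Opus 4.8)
This statement is classical and goes back to Gordon's work on Dehn surgery and satellite knots; the plan is to prove it by a torus decomposition argument. First I fix a tubular neighborhood $N(K)$ of $K$ and write $S^3 = N(K)\cup_T E(K)$, where $T=\partial N(K)$ is the cabling torus and $E(K)=\overline{S^3\setminus N(K)}$ is the exterior of $K$. After a small isotopy the cable $K_{p,q}$ lies in the interior of the solid torus $N(K)$ as a $(p,q)$ torus knot, and its exterior $M:=N(K)\setminus \on{int}N(K_{p,q})$ inside this solid torus is the \emph{cable space}: a compact Seifert fibered manifold with two torus boundary components $T$ and $T':=\partial N(K_{p,q})$, carrying a single exceptional fiber of order $p$. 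Since the surgery takes place inside $N(K)$, the $pq$--surgered manifold decomposes as
\[
S^3_{pq}(K_{p,q}) \;=\; E(K)\,\cup_T\, X ,\qquad X:=M\cup_{T'}W,
\]
where $W$ is the surgery solid torus, glued to $M$ along $T'$ so that its meridian goes to the $pq$--slope. Note that $X$ depends only on $p$ and $q$, not on $K$.

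The core of the argument is to identify $X$ together with the slope it induces on its boundary $T$. The key geometric input is the \emph{cabling annulus} $\mathcal A\subset M$, a properly embedded essential (vertical) annulus running from a longitude of $K_{p,q}$ on $T'$ to a $(p,q)$--curve on $T$; since two disjoint parallel copies of $K_{p,q}$ on the torus $T$ have linking number $pq$ in $S^3$, the framing that $\mathcal A$ induces on $K_{p,q}$ is exactly the surgery framing $pq$. Hence $\partial \mathcal A\cap T'$ is precisely the meridian slope of $W$, so $\mathcal A$ together with a meridian disk of $W$ caps off to a properly embedded disk $D\subset X$ with $\partial D$ a $(p,q)$--curve on $T$. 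Cutting $X$ along $D$ and using the Seifert structure on $M$ — the vertical annulus $\mathcal A$ cuts $M$ into a fibered solid torus, while $W$ cut along its meridian disk is a ball — one reassembles the pieces and finds $X\cong V_0 \,\#\, L(p,q)$, where $V_0$ is a solid torus whose meridian, transported to $T=\partial V_0$, is the $q/p$--slope in the meridian--longitude framing of $K$.

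Granting this, the proof concludes at once: the connected--sum sphere in $X$ persists in $S^3_{pq}(K_{p,q})=E(K)\cup_T X$, and splitting along it gives
\[
S^3_{pq}(K_{p,q}) \;\cong\; \bigl(E(K)\cup_T V_0\bigr)\,\#\,L(p,q) \;=\; S^3_{q/p}(K)\,\#\,L(p,q),
\]
the last equality holding because filling $E(K)$ along the $q/p$--slope is by definition $q/p$--surgery on $K$. I expect the main obstacle to be the slope bookkeeping in the middle step: one has to fix orientations and framings carefully enough to be certain that $D$ meets $T$ in a $(p,q)$--curve rather than its mirror, that the leftover filling slope on $T$ is $q/p$ and not some other rational, and that the lens summand is $L(p,q)$ with the correct orientation; the degenerate case $p=1$, where $L(1,q)=S^3$ and $K_{1,q}$ is isotopic to $K$, should be checked separately for consistency. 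As an alternative to this topological bookkeeping, the identity can instead be verified by an explicit Kirby--calculus computation: present $pq$--surgery on $K_{p,q}$ by a framed link consisting of $K$ together with a meridional unknot carrying the cabling data, and simplify via Rolfsen twists and blow--downs.
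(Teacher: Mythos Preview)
Your argument is correct and is essentially the same as the paper's: both exploit the cabling annulus to see that the $pq$--surgery solid torus splits into two $2$--handles, one attached to $E(K)$ along a $q/p$--curve and one to $N(K)$ along a $p/q$--curve, yielding the two summands. The only difference is packaging---the paper places $K_{p,q}$ directly on $T(K)$ and names the two $2$--handles $H_1,H_2$, whereas you push $K_{p,q}$ into the interior and phrase things via the cable space and its Seifert structure---but your annulus $\mathcal A$ is the paper's $T(K)\setminus A$ and your disk $D$ is its $H_1$, so the proofs coincide in substance.
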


The above homeomorphism is exhibited in many references (cf \cite{HeddencablingII}). For self-containedness, we include a proof of Proposition \ref{redsurg} below.  Not only is this reducible surgery a key ingredient of establishing our main result Theorem \ref{main}, the geometric description of the homeomorphism is also crucial for justifying Lemma \ref{spinproj}.

\begin{proof}[Proof of Proposition \ref{redsurg}]
Denote $N(K)$ the tubular neighborhood of $K$ and $E(K)=S^3-N(K)$ its complement, and let $T(K)$ be the boundary torus of $N(K)$.  The cable $K_{p,q}$ is embedded in $T(K)$ as a curve of slope $p/q$.  Consider the tubular neighborhood $N(K_{p,q})$ of the cable.  The solid torus $N(K_{p,q})$ intersects $T_K$ at an annular neighborhood $A=N(K_{p,q})\cap T(K)$, and the boundary of this annulus consists of two parallel copies of $K_{p,q}$, denoted by $\lambda$ and $\lambda '$, each of which have linking number $pq$ with $K_{p,q}$.  Therefore, the surgery slope of coefficient $pq$ is given by $\lambda$ (or equivalently, $\lambda '$), and the $pq$-surgery on $K_{p,q}$ is performed by gluing a solid torus to the knot complement $E(K_{p,q})$ in such a way that the meridian is identified with a curve isotopic to $\lambda$.

\begin{figure}
\includegraphics[scale=0.5]{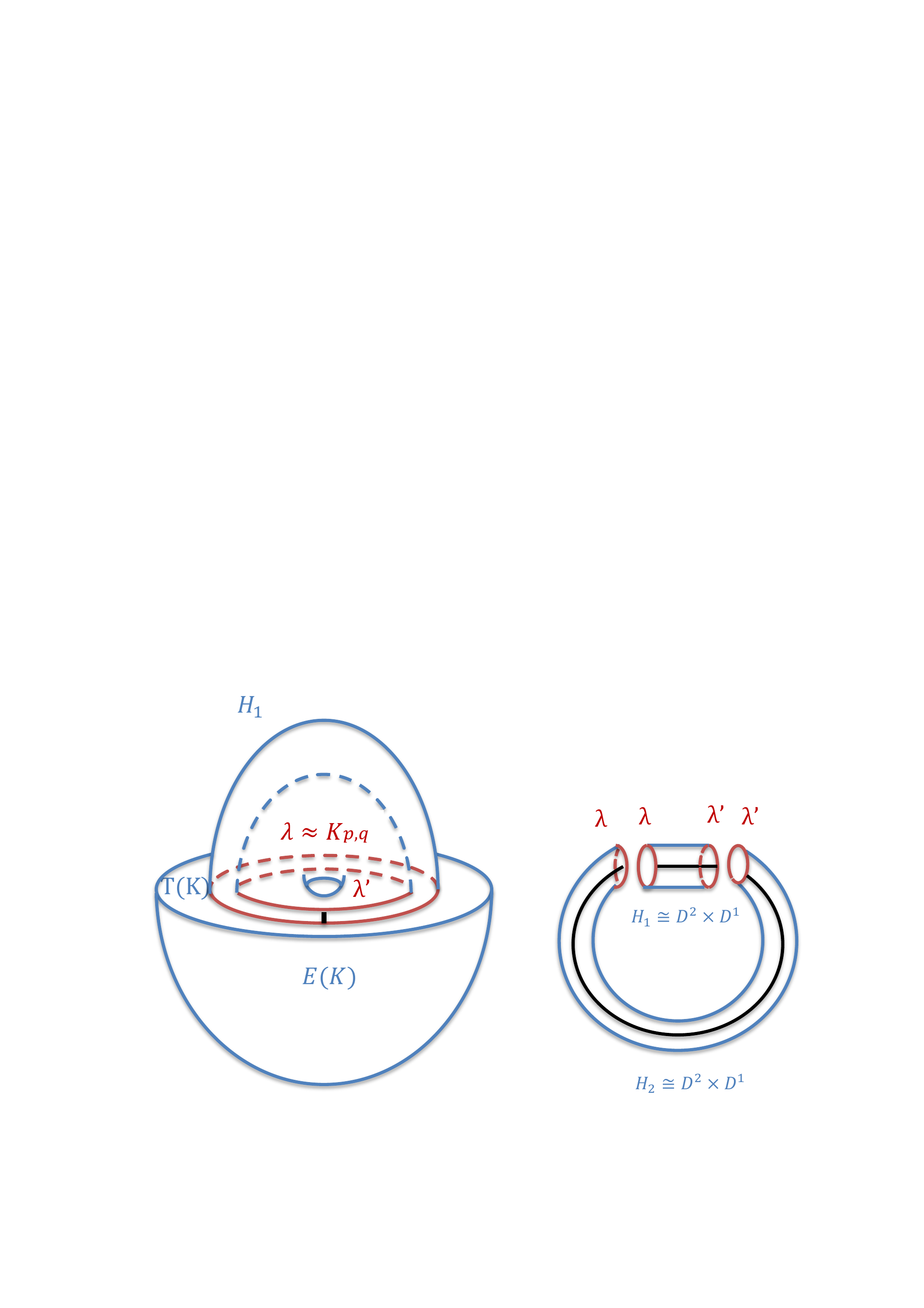}
\caption{Attach the $2$-handle $H_1$ to $E(K)$ along $K_{p,q}$.  The two disk-ends of $H_1$ are identified with the corresponding disk ends of the other $2$-handle $H_2$ that is attached to $N(K)$.     }
\label{handle}
\end{figure}

On the other hand, one can think of the above gluing as attaching a pair of $2$-handles $H_1$, $H_2$ to $E(K_{p,q})$.  See Figure \ref{handle}.  Since the exterior of $K_{p,q}$ is homeomorphic to $$E(K_{p,q})=E(K)\cup_{T(K)-A} N(K),$$ its $pq$-surgery can be decomposed as
$$S^3_{pq}(K_{p,q})=[E(K)\cup H_1]\cup [N(K)\cup H_2].$$ As the $2$-handles are attached along essential curves on $T(K)$ (isotopic to $\lambda$), $E(K)\cup H_1$ and $N(K)\cup H_2$ end up having a common boundary homeomorphic to $S^2$.  This proves that $S^3_{pq}(K_{p,q})$ is a reducible manifold.

To further identify the two pieces of the reducible manifold, note that the attaching curve is isotopic to $\lambda$, which has slope $p/q$ on $T(K)$.  It follows that $$N(K)\cup H_2 \cong L(p,q)-D^3.$$  From the perspective of $E(K)$, the curve $\lambda$ has slope $q/p$.  Thus, the other piece is $$E(K)\cup H_1 \cong S^3_{q/p}(K)-D^3.$$  This completes the proof.

\end{proof}

\section{\spinc structures in reducible surgery}

Let us take a closer look at the surgery formula (\ref{Corr}), in which there is an implicit identification of \spinc structure $$\sigma: \Z/p\Z \rightarrow \spin(S^3_{p/q}(K))$$ For simplicity, we use an integer $0\leq i \leq p-1$ to denote the \spinc structure $\sigma([i])$, when $[i]\in \Z/p\Z$ is the congruence class of $i$ modulo $p$.  The identification can be made explicit by the procedure in  Ozsv\'ath and Szab\'o \cite[Section 4,7]{OSrational}.  In particular, it is independent of the knot $K$ on which the surgery is applied\footnote{Thus, formula (\ref{Corr}) may be interpreted as comparing the correction terms of the ``same'' \spinc structure of surgery on different knots.}; and it is affine:  $$\sigma[i+1]-\sigma[i]=[K']\in H_1(S^3_{p/q}(K))\cong \spin(S^3_{p/q}(K))$$ where $K'$ is the dual knot of the surgery on $K$, and \spinc structures are affinely identified with the first homology.  Moreover, the conjugation map $J$ on \spinc structures can be expressed as
\begin{equation}\label{conjspin}
J(\sigma([i]))=\sigma([p+q-1-i])
\end{equation}
(cf \cite[Lemma 2.2]{LiNi}).  We will use these identifications throughout this paper.

In \cite[Proposition 4.8]{OSabsgr}, Ozsv\'ath and Szab\'o made an identification of \spinc structures on lens spaces through their standard genus 1 Heegaard diagram, which coincide with the above identification through surgery (on the unknot). They also proved the following recursive formula for the correction terms of lens spaces
\begin{equation}\label{lenscor}
d(L(p,q),i)=\frac{(2i+1-p-q)^2-pq}{4pq}-d(L(q,r),j)
\end{equation}
for positive integers $p>q$ and $0\leq i <p+q$, where $r$ and $j$ are the reduction module $q$ of $p$ and $i$, respectively.  Substituting in $q=1$, one sees:
\begin{equation}\label{lenscor2}
d(L(p,1),i)=\frac{(2i-p)^2-p}{4p}
\end{equation}

For a reducible manifold $Y=Y_1\#Y_2$,  there are projections from $\spin (Y)$ to the \spinc structure of the two factors $\spin (Y_1)$ and $\spin(Y_2)$. Particularly, this applies to the case of the reducible surgery $S^3_{pq}(K_{p,q})\cong S^3_{q/p}(K) \# L(p,q)$. In terms of the canonical identification above, we write $\phi_1: \Z/pq\Z \rightarrow \Z/q\Z$ and $\phi_2: \Z/pq\Z \rightarrow \Z/p\Z$ for the two projections. These two maps are independent of the knot $K$, which we determine explicitly in the next lemma.

With the above notations and identifications of \spinc structures understood, we apply the surgery formula (\ref{Corr}) to both sides of the reducible manifold $S^3_{pq}(K_{p,q})\cong S^3_{q/p}(K) \# L(p,q)$ and deduce

\begin{multline}\label{longeq}
d(L(pq, 1),i)-2V_i(K_{p,q})=d(L(q,p), \phi_1(i))+d(L(p,q), \phi_2(i))\\
-2\max\{V_{\lfloor\frac{\phi_1(i)}p\rfloor}(K),V_{\lfloor\frac{p+q-1-\phi_1(i)}p\rfloor}(K)\}
\end{multline}
for all $i\leq \frac{pq}{2}$.  Here we used the fact that $V_i \geq V_{pq-i}$ when $i\leq \frac{pq}{2}$, as $\{V_k\}$ is a non-increasing sequence.  When $K$ is the unknot, (\ref{longeq}) simplifies to
\begin{equation}\label{eq1}
d(L(pq, 1),i)-2V_i(T_{p,q})=d(L(q,p), \phi_1(i))+d(L(p,q), \phi_2(i))
\end{equation}
as all $V_i$'s are 0 for the unknot.

For the rest of the section, assume $Y=S^3_{pq}(K_{p,q})$, $Y_1=S^3_{q/p}(K)$ and $Y_2=L(p,q)$, and denote $K'\subset Y_1=S^3_{q/p}(K)$ and $K_{p,q}'\subset Y= S^3_{pq}(K_{p,q})$ the dual knots of the surgery on $K$ and $K_{p,q}$, respectively.

\begin{lem}\label{spinproj}
The projection maps of the \spinc structure $\phi_1$ and $\phi_2$ are given by:
$$\phi_1(i)=i-\frac{(p-1)(q-1)}{2} \, \pmod{q};$$
$$\phi_2(i)=i-\frac{(p-1)(q-1)}{2} \, \pmod{p}.$$

\end{lem}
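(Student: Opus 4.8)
The plan is to pin down the two projection maps $\phi_1,\phi_2$ by a purely homological computation, using the geometric description of the homeomorphism (\ref{redeq}) given in the proof of Proposition \ref{redsurg}. Since both $\phi_1$ and $\phi_2$ are independent of $K$, I am free to take $K$ to be the unknot, so that $K_{p,q}=T_{p,q}$ is the torus knot, $Y_1=S^3_{q/p}(\unknot)=L(q,p)$, $Y_2=L(p,q)$, and $Y=S^3_{pq}(T_{p,q})=L(q,p)\#L(p,q)$. Working with lens spaces makes all the first homology groups and their generators completely explicit.

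First I would set up the affine structure. Recall from Section 4 that the identification $\sigma\co \Z/pq\Z\to\spin(Y)$ is affine with respect to the dual knot class: $\sigma[i+1]-\sigma[i]=[K_{p,q}']\in H_1(Y)$. A projection $\phi_k$ is induced by the inclusion of one connected-sum factor, hence it is an affine map $\Z/pq\Z\to\Z/(\text{factor})\Z$ once we fix where $0$ goes; so it suffices to (i) compute the linear parts, i.e.\ the images of the dual knot class $[K_{p,q}']$ in $H_1(Y_1)$ and $H_1(Y_2)$, and (ii) compute the constant term by evaluating $\phi_1,\phi_2$ on a single distinguished \spinc structure. For the linear part: in the decomposition $Y=[E(K)\cup H_1]\cup[N(K)\cup H_2]=(Y_1-D^3)\cup(Y_2-D^3)$, the dual knot $K_{p,q}'$ is the core of the surgery solid torus, which lies in a neighborhood of the gluing $S^2$; tracing it into each factor, it becomes the core of the attached $2$-handle, i.e.\ the dual knots $K'\subset Y_1$ (on the unknot: a generator of $H_1(L(q,p))$) and the corresponding curve in $Y_2=L(p,q)$. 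Comparing with the standard affine identifications of $\spin(L(q,p))$ and $\spin(L(p,q))$, one reads off that both linear parts are multiplication by $1$ (the relevant framings/linking numbers match up), so $\phi_1(i)\equiv i+c_1\pmod q$ and $\phi_2(i)\equiv i+c_2\pmod p$ for constants $c_1,c_2$.

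Next I would fix the constants using the conjugation symmetry, which is the cleanest invariant anchor. By (\ref{conjspin}), conjugation on $\spin(Y)$ acts by $i\mapsto pq-1-i$ (here $p,q$ in the formula are $pq$ and $1$); conjugation on $\spin(L(q,p))$ by $j\mapsto q+p-1-j$, and on $\spin(L(p,q))$ by $j\mapsto p+q-1-j$. Since each $\phi_k$ is a map of connected-sum data it must intertwine the conjugation maps; applying this to $\phi_1(i)=i+c_1$ gives $(pq-1-i)+c_1\equiv q+p-1-(i+c_1)\pmod q$, i.e.\ $2c_1\equiv p+q-pq\equiv p+q\equiv p-(q-1)\cdot q/q\dots$ — more carefully $2c_1\equiv (p-1)+(q-1)-(p-1)(q-1)-\cdots$; collecting, one finds $c_1\equiv -\tfrac{(p-1)(q-1)}2\pmod q$, and symmetrically $c_2\equiv-\tfrac{(p-1)(q-1)}2\pmod p$ (the half-integer ambiguity is killed by the fact that $(p-1)(q-1)$ is even, so the expression is a genuine integer and the two possible values differ by $q$ resp.\ $p$; pinning the correct one requires checking the grading at a single \spinc structure, e.g.\ that $\phi_k$ sends the self-conjugate \spinc structure of $Y$ to self-conjugate ones, or a direct comparison of $d$-invariants via (\ref{eq1}) at $i=0$). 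This yields exactly the claimed formulas.

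The main obstacle I anticipate is step (i)–(ii) of the second paragraph: correctly matching the \emph{geometric} dual-knot curve with the \emph{algebraic} standard generator in each lens-space factor, keeping the orientation/framing conventions of \cite{OSrational} and \cite{OSabsgr} consistent throughout, so that the linear parts really come out to be $+1$ rather than $\pm(\text{something})$. Once that bookkeeping is done, fixing the additive constant is forced by conjugation symmetry together with the parity of $(p-1)(q-1)$, and the lemma follows; a sanity check against (\ref{eq1}) with $K=\unknot$ (where $d$-invariants of lens spaces are computed by (\ref{lenscor})) confirms the answer.
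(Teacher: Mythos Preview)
Your overall strategy is exactly the paper's: write $\phi_k(i)=a_k i+b_k$, use the geometric description of the dual knot $K_{p,q}'$ to see that its image in each factor is the respective dual knot class (hence $a_1=a_2=1$), and then use $J$-equivariance to constrain $b_k$. Where your proposal diverges from the paper is in the last step, and there it has a genuine gap.

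The equation $\phi_1\circ J=J\circ\phi_1$ gives $2b_1\equiv -(p-1)(q-1)\pmod q$. When $q$ is odd this has a unique solution, but when $q$ is even it has \emph{two} solutions, namely $-\tfrac{(p-1)(q-1)}{2}$ and $-\tfrac{(p-1)(q-1)}{2}+\tfrac{q}{2}$, which differ by $q/2$, not by $q$ as you wrote; so the ambiguity is real and not ``killed'' by the integrality of $\tfrac{(p-1)(q-1)}{2}$. Your two proposed fixes do not work as stated. The self-conjugate argument fails because when, say, $p$ is even, both $Y=S^3_{pq}(T_{p,q})$ and $Y_2=L(p,q)$ have exactly two self-conjugate \spinc structures, and $J$-equivariance alone does not tell you which maps to which. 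Checking (\ref{eq1}) at $i=0$ requires knowing $V_0(T_{p,q})$ independently, which is not available without additional input (and is essentially what you are trying to compute).

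The paper resolves this ambiguity by a more substantial $d$-invariant comparison: it first establishes directly from (\ref{lenscor}) and (\ref{lenscor2}) that $d(L(pq,1),j+\tfrac{(p-1)(q-1)}{2})=d(L(q,p),j)+d(L(p,q),j)$ for $0\le j<p+q$; then, assuming the wrong value of $b_2$ and using (\ref{eq1}) in the range $i>\tfrac{(p-1)(q-1)}{2}$ where $V_i(T_{p,q})=0$, it deduces $d(L(p,q),j)=d(L(p,q),j+\tfrac{p}{2})$ for a range of $j$; finally it contradicts this using the Lee--Lipshitz identity $d(L(p,q),j+q)-d(L(p,q),j)=\tfrac{p-1-2j}{p}$. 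So the missing ingredient in your plan is precisely this Lee--Lipshitz step (or some equivalent way to distinguish the two candidate constants for general $p,q$).
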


\begin{proof}
Since the projection maps are affine, we assume $$\phi_1(i)=a_1\cdot i+b_1 \pmod{q},$$ $$\phi_2(i)=a_2\cdot i+b_2 \pmod{p}.$$  Note that the maps $\phi_1$, $\phi_2$ are generally not homomorphisms.  Nevertheless, we claim that $\phi_1(i+1)-\phi_1(i)=1 \pmod{q}$.  Under Ozsv\'ath-Szab\'o's canonical identification $\sigma: \Z/pq\Z \rightarrow \spin(Y)$, we have $\sigma[i+1]-\sigma[i]=[K_{p,q}']\in H_1(Y)\cong \Z/pq\Z$. Thus, it amounts to show that $\phi_{1}[K_{p,q}']=[K']$ under the projection of $Y$ into the first factor $Y_1$.

This can be seen from the geometric description of the reducible surgery in last section: The dual knot $K_{p,q}'$, isotopic to the closed black curve on the right of Figure \ref{handle}, projects to an arc in $E(K)\cup H_1$ on the left of Figure \ref{handle}.  This arc is closed up in $Y_1$ by connecting it to a simple arc in $D^3=Y_1-(E(K)\cup H_1)$.  Since the curve intersects $\lambda$ once, it must represent\footnote{To be accurate, this is true up to a proper choice of orientation of $K'$.}  $[K']\in H_1(Y_1)$.
Hence
 $$1=\phi_1(i+1)-\phi_1(i)=(a_1\cdot (i+1)+b_1)-(a_1\cdot i +b_1) \pmod{q} $$ from which we see $a_1=1$.  A similar argument proves $a_2=1$.

To determine $b_1$, note that the projection $\phi_1$ commutes with the conjugation $J$ as operations on \spinc structures.
After substituting the equation $\phi_1(i)=i+b_1 \, \pmod{q}$ into $\phi_1\circ J=J\circ \phi_1$ and applying (\ref{conjspin}), we get
$$(pq-i)+b_1 =p+q-1-(i+b_1) \pmod{q}.$$
So
$$b_1=\left\{
	\begin{array}{ll}
	-\frac{(p-1)(q-1)}{2}  &\textup{if } q \textup{ is odd}\\
	-\frac{(p-1)(q-1)}{2} \; \textup{or} \, -\frac{(p-1)(q-1)}{2}+\frac{q}{2} &\textup{if } q \textup{ is even}
	\end{array}
	\right.
$$
where the identity is understood modulo $q$ as before.  Similar arguments also imply:
$$b_2=\left\{
	\begin{array}{ll}
	-\frac{(p-1)(q-1)}{2}  &\textup{if } p \textup{ is odd}\\
	-\frac{(p-1)(q-1)}{2} \; \textup{or} \, -\frac{(p-1)(q-1)}{2}+\frac{p}{2} &\textup{if } p \textup{ is even}
	\end{array}
	\right.
$$

We argue that $b_1=b_2=-\frac{(p-1)(q-1)}{2} $.  This is evidently true when both $p$ and $q$ are odd integers.  When $p$ is even and $q$ is odd, we want to exclude the possibility $b_1=-\frac{(p-1)(q-1)}{2}$ and $b_2=-\frac{(p-1)(q-1)}{2}+\frac{p}{2}$ using the method of proof by contradiction. A similar argument will address the case for which $p$ is odd and $q$ is even, and thus completes the proof.

We derive a contradiction by comparing the correction terms computed in two ways.
From equation (\ref{lenscor}) and (\ref{lenscor2}), we have
\begin{eqnarray*}
 d(L(pq,1),j+\frac{(p-1)(q-1)}{2}) &=& \frac{(2j+1-p-q)^2-pq}{4pq}\\
&=& d(L(q,p),j)+d(L(p,q),j)
\end{eqnarray*}
for $0\leq j <p+q$.

On the other hand,  it follows from (\ref{eq1}) that $$d(L(pq, 1),j+\frac{(p-1)(q-1)}{2})=d(L(q,p), j)+d(L(p,q), j+\frac{p}{2})$$ for $0\leq j <p+q-1$.  Here, we used the fact that $V_i(T_{p,q})=0$ for all $i>\frac{(p-1)(q-1)}{2}$ (since $g_4(T_{p,q})=\frac{(p-1)(q-1)}{2}$) and the assumptions $\phi_1(i)=i-\frac{(p-1)(q-1)}{2}$ and $\phi_2(i)=i-\frac{(p-1)(q-1)}{2}+\frac{p}{2}$.  Comparing the above two identities, we obtain
\begin{equation}\label{contraidentity}
d(L(p,q),j)=d(L(p,q),j+\frac{p}{2})
\end{equation}

Recall from Lee-Lipshitz \cite[Corollary 5.2]{LeeLip} that correction terms of lens spaces also satisfy the identity
\begin{equation*}\label{corLeeLip}
d(L(p,q),j+q)-d(L(p,q),j)=\frac{p-1-2j}{p}
\end{equation*}
for $0\leq j<p$.
It follows $$d(L(p,q),j+\frac{p}{2}+q)-d(L(p,q),j+\frac{p}{2})=\frac{p-1-2(j+\frac{p}{2})}{p}=\frac{-1-2j}{p}$$
Yet, according to (\ref{contraidentity}), $$d(L(p,q),j+\frac{p}{2}+q)-d(L(p,q),j+\frac{p}{2})= d(L(p,q),j+q)-d(L(p,q),j)=\frac{p-1-2j}{p}$$
 We reach a contradiction!  This completes the proof of the lemma.

\end{proof}

As a quick check of Lemma \ref{spinproj}, let us look at the surgery $S^3_{15}(T_{3,5})\cong L(5,3) \# L(3,5)$.  The correction terms of the three lens spaces with \spinc structure $i$ are computed using (\ref{lenscor}) and summarized in Table \ref{table1} below.

\begin{table}[ht!]
  \centering
  \begin{tabular}{|c | c c c c c c c c|}
   \hline
   \backslashbox{lens space}{$i$} & $0$ & $1$ & $2$ & $3$ & $4$ & $5$ & $6$ & $\cdots$ \\
   \hline
   $L(15,1)$& $7/2$ & $77/30$ & $53/30$ & $11/10$ & $17/30$ & $1/6$ & $-1/10$ & $\cdots$\\
   $L(5,3)$ & $2/5$& $0$ & $2/5$ & $-2/5$ & $-2/5$ & $2/5$ & $2/5$ & $\cdots$ \\
   $L(3,5)$ & $1/6$ & $1/6$ & $-1/2$ & $1/6$ & $1/6$ & $-1/2$ & $1/6$ & $\cdots$ \\
   \hline
  \end{tabular}
  \caption{Table of correction terms for lens spaces}
  \label{table1}
  \end{table}

Meanwhile, we compute the projection functions $\phi_1(i)$, $\phi_2(i)$ (according to the formula in Lemma \ref{spinproj}) and $V_i(T_{3,5})$, and summarize the results in Table \ref{table2}. We can then verify identity (\ref{eq1}) using the values of correction term
provided in Table \ref{table1}.  In particular, note that at $i=\frac{(p-1)(q-1)}{2}=4$, there is the column $\phi_1=\phi_2=V=0$, as expected from Lemma \ref{spinproj}, and there is the identity $17/30=2/5+1/6$ that we can read off.

\begin{table}[ht!]
  \centering
  \begin{tabular}{|c | c c c c c c c c|}
   \hline
   \backslashbox{Function}{$i$} & $0$ & $1$ & $2$ & $3$ & $4$ & $5$ & $6$ & $\cdots$ \\
   \hline
   $\phi_1$& $1$ & $2$ & $3$ & $4$ & $0$ & $1$ & $2$ & $\cdots$\\
   $\phi_2$ & $2$& $0$ & $1$ & $2$ & $0$ & $1$ & $2$ & $\cdots$ \\
   $V$ & $2$ & $1$ & $1$ & $1$ & $0$ & $0$ & $0$ & $\cdots$ \\
   \hline
  \end{tabular}
  \caption{projections $\phi_1$, $\phi_2$ are given by Lemma \ref{spinproj}}.
\label{table2}
 \end{table}

\section{Proof of cabling formula}

In this section, we prove Theorem \ref{main}. First, note the following relationship between the sequences $V_i(K_{p,q})$ and $V_i(K)$ if we compare (\ref{longeq}) and (\ref{eq1}).

\begin{lem}\label{cablemma}
Given $p,q>0$ and $i\leq \frac{pq}{2}$, the sequence of non-negative integers $V_i(K_{p,q})$ and $V_i(K)$ satisfy the relation
\begin{equation}\label{cabcorrect}
V_i(K_{p,q})=V_i(T_{p,q})+2\max\{V_{\lfloor\frac{\phi_1(i)}p\rfloor}(K),V_{\lfloor\frac{p+q-1-\phi_1(i)}p\rfloor}(K)\}
\end{equation}
Here, $\phi_1(i)=i-\frac{(p-1)(q-1)}{2}$ as above.
\end{lem}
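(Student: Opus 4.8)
The plan is to derive \eqref{cabcorrect} purely algebraically from the two surgery-formula identities already in hand, namely the general identity \eqref{longeq} and its unknot specialization \eqref{eq1}, by subtracting one from the other. The key observation is that the left-hand sides of \eqref{longeq} and \eqref{eq1} share the identical term $d(L(pq,1),i)$, and their right-hand sides share the identical terms $d(L(q,p),\phi_1(i))+d(L(p,q),\phi_2(i))$, since by Lemma \ref{spinproj} the projection maps $\phi_1,\phi_2$ do not depend on the knot $K$. Hence all lens-space correction terms cancel when we subtract, and we are left only with the $V$-terms.

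Concretely, I would first rewrite \eqref{longeq} as
\[
d(L(pq,1),i)-2V_i(K_{p,q}) = d(L(q,p),\phi_1(i))+d(L(p,q),\phi_2(i)) - 2\max\{V_{\lfloor\frac{\phi_1(i)}p\rfloor}(K),V_{\lfloor\frac{p+q-1-\phi_1(i)}p\rfloor}(K)\},
\]
and \eqref{eq1} as
\[
d(L(pq,1),i)-2V_i(T_{p,q}) = d(L(q,p),\phi_1(i))+d(L(p,q),\phi_2(i)).
\]
Subtracting the first from the second, the three lens-space terms and the common term $d(L(pq,1),i)$ all cancel, leaving
\[
-2V_i(T_{p,q}) + 2V_i(K_{p,q}) = 2\max\{V_{\lfloor\frac{\phi_1(i)}p\rfloor}(K),V_{\lfloor\frac{p+q-1-\phi_1(i)}p\rfloor}(K)\}.
\]
Dividing by $2$ and rearranging gives exactly \eqref{cabcorrect}. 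I would also recall the substitution $\phi_1(i)=i-\frac{(p-1)(q-1)}{2}\pmod q$ from Lemma \ref{spinproj} to justify the last sentence of the statement, and remark that the constraint $i\leq \frac{pq}{2}$ is precisely what licenses the use of \eqref{longeq} and \eqref{eq1} (it is where the $\max$ in \eqref{Corr} was simplified using monotonicity of $\{V_k\}$).

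There is essentially no genuine obstacle here: the lemma is a bookkeeping consequence of having already set up the two surgery identities and, crucially, having proved in Lemma \ref{spinproj} that $\phi_1,\phi_2$ are knot-independent — that independence is the only substantive input, and it is exactly why the right-hand side lens-space data cancels. The one point that deserves a word of care is that in \eqref{longeq} the second argument of $d(L(q,p),\cdot)$ and $d(L(p,q),\cdot)$ must genuinely be $\phi_1(i)$ and $\phi_2(i)$ with the \emph{same} formulas as in \eqref{eq1}; since both come from applying \eqref{Corr} to the two connected-sum factors via the knot-independent projections, this holds. I would close by noting that \eqref{cabcorrect} is the engine for Theorem \ref{main}: taking $i$ just below the conjectured value of $\nu^+(K_{p,q})$ and using $\nu^+(K_{p,q})=\min\{k: V_k=0\}$ together with the known value $\nu^+(T_{p,q})=\frac{(p-1)(q-1)}{2}$ will pin down $\nu^+(K_{p,q})$ under the hypothesis $q\geq(2\nu^+(K)-1)p-1$.
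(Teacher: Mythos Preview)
Your approach is exactly the paper's: the lemma is stated with the one-line justification ``if we compare \eqref{longeq} and \eqref{eq1}'', and your subtraction argument is precisely that comparison made explicit.

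One arithmetic point deserves flagging. From
\[
-2V_i(T_{p,q}) + 2V_i(K_{p,q}) = 2\max\{V_{\lfloor\frac{\phi_1(i)}p\rfloor}(K),V_{\lfloor\frac{p+q-1-\phi_1(i)}p\rfloor}(K)\}
\]
dividing by $2$ actually yields
\[
V_i(K_{p,q}) = V_i(T_{p,q}) + \max\{V_{\lfloor\frac{\phi_1(i)}p\rfloor}(K),V_{\lfloor\frac{p+q-1-\phi_1(i)}p\rfloor}(K)\},
\]
\emph{without} the coefficient $2$ that appears in \eqref{cabcorrect} as printed. So your claim that the subtraction ``gives exactly \eqref{cabcorrect}'' is not literally true; rather, the coefficient $2$ in the stated lemma (and in \eqref{last}) appears to be a typo in the paper. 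This has no effect on the proof of Theorem~\ref{main}, since there one only uses the equivalence $V_i(K_{p,q})=0 \iff V_{\lfloor \phi_1(i)/p\rfloor}(K)=0$ in the relevant range, which is insensitive to the spurious factor.
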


In order to evaluate $\nu^+(K_{p,q})$ from equation (\ref{def2}), it is enough to determine the minimum $i$ such that $V_i(K_{p,q})=0$.  Since $V_i(T_{p,q})>0$ when $i<\frac{(p-1)(q-1)}{2}$, we only need to consider $i\geq \frac{(p-1)(q-1)}{2}$ by (\ref{cabcorrect}).

\begin{proof}[Proof of Theorem \ref{main}]
When $q\geq (2\nu^+(K)-1)p+1$, we have $p\nu^+(K)+\frac{(p-1)(q-1)}{2}\leq \frac{pq}{2}$, so the condition $i\leq \frac{pq}{2}$ in Lemma \ref{cablemma} is satisfied for all $i$ in the range $\frac{(p-1)(q-1)}{2}\leq i \leq p\nu^+(K)+\frac{(p-1)(q-1)}{2}$.  Equation (\ref{cabcorrect}) simplifies to 
\begin{equation}\label{last}
V_i(K_{p,q})=2 V_{\lfloor\frac{\phi_1(i)}p\rfloor}(K)
\end{equation}
 as $V_i(T_{p,q})=0$ and $\lfloor\frac{\phi_1(i)}p\rfloor \leq \lfloor\frac{p+q-1-\phi_1(i)}p\rfloor$ for $\phi_1(i)=i-\frac{(p-1)(q-1)}{2}$.  As $V_i(K)=0$ if and only if $i\geq \nu^+(K)$, it is easy to see from (\ref{last}) and Lemma \ref{spinproj} that the minimum $i$ such that $V_i(K_{p,q})=0$ is $p\nu^+(K)+\frac{(p-1)(q-1)}{2}$.  Hence, $\nu^+(K)=p\nu^+(K)+\frac{(p-1)(q-1)}{2}$.

\end{proof}

When $q< (2\nu^+(K)-1)p+1$, the cabling formula for $\nu^+(K_{p,q})$ is still unknown.  Nevertheless, the preceding argument gives the following lower bound.

\begin{prop}
For $p,q>0$ and the cable knot $K_{p,q}$, $$\nu^+(K_{p,q})\geq \frac{pq}{2}$$ when $q< (2\nu^+(K)-1)p+1$.

\end{prop}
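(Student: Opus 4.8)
The plan is to extract the lower bound directly from Lemma~\ref{cablemma} by locating the relevant range of indices. Recall that $\nu^+(K_{p,q})$ is the minimum $i$ with $V_i(K_{p,q})=0$, and by equation~(\ref{cabcorrect}) together with the fact that $V_i(T_{p,q})>0$ for $i<\frac{(p-1)(q-1)}{2}$, any such $i$ must satisfy $i\geq \frac{(p-1)(q-1)}{2}$. First I would observe that the formula (\ref{cabcorrect}) is only asserted in the regime $i\leq \frac{pq}{2}$, so if I can show that $V_i(K_{p,q})>0$ for every $i$ in the window $\frac{(p-1)(q-1)}{2}\leq i\leq \frac{pq}{2}$, then the minimal vanishing index lies strictly above $\frac{pq}{2}$, which is exactly the claimed bound $\nu^+(K_{p,q})\geq \frac{pq}{2}$.

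The main step is therefore to verify that the right-hand side of (\ref{cabcorrect}) is strictly positive throughout that window when $q<(2\nu^+(K)-1)p+1$. Since $V_i(T_{p,q})=0$ once $i\geq\frac{(p-1)(q-1)}{2}$, this reduces to showing $V_{\lfloor \phi_1(i)/p\rfloor}(K)>0$, i.e.\ $\lfloor \phi_1(i)/p\rfloor < \nu^+(K)$, where $\phi_1(i)=i-\frac{(p-1)(q-1)}{2}$. For $i$ at the top of the window, $i=\lfloor \frac{pq}{2}\rfloor$, we get $\phi_1(i)\approx \frac{pq}{2}-\frac{(p-1)(q-1)}{2}=\frac{p+q-1}{2}$, so $\lfloor \phi_1(i)/p\rfloor\approx \frac{p+q-1}{2p}$; the hypothesis $q<(2\nu^+(K)-1)p+1$ is precisely what forces this quantity to be $<\nu^+(K)$. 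I would also need to handle the comparison $\lfloor \phi_1(i)/p\rfloor$ versus $\lfloor (p+q-1-\phi_1(i))/p\rfloor$ — but since the maximum only makes the right side larger, positivity of the first term already suffices, and I should double-check the parity/floor bookkeeping at $i=\frac{pq}{2}$ when $pq$ is even (and note $i\leq\frac{pq}{2}$ means one takes $i=\frac{pq}{2}$ or its floor as appropriate).

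The main obstacle I anticipate is purely arithmetic: carefully pinning down the inequality $\lfloor(i-\frac{(p-1)(q-1)}{2})/p\rfloor<\nu^+(K)$ for all $i\leq\frac{pq}{2}$ from the hypothesis $q\leq(2\nu^+(K)-1)p$, being attentive to the floor functions and to whether $\frac{(p-1)(q-1)}{2}$ is an integer (it is not always, but $\phi_1$ is still well-defined modulo $q$ and the surgery index $i$ ranges over integers, so one tracks $i-\frac{(p-1)(q-1)}{2}$ as a half-integer when $p,q$ are both even — here I would lean on the same conventions already used in Section~4 and Lemma~\ref{cablemma}). Once that inequality is established, the conclusion is immediate: every $i\leq\frac{pq}{2}$ has $V_i(K_{p,q})\geq 2V_{\lfloor\phi_1(i)/p\rfloor}(K)>0$, hence $\nu^+(K_{p,q})>\frac{pq}{2}-1$, and since $\nu^+$ is a nonnegative integer this gives $\nu^+(K_{p,q})\geq\lceil\frac{pq}{2}\rceil\geq\frac{pq}{2}$.
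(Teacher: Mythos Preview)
Your approach is correct and matches what the paper means by ``the preceding argument gives the following lower bound'': one uses Lemma~\ref{cablemma} to show $V_i(K_{p,q})>0$ for every integer $i\le pq/2$ under the hypothesis, whence $\nu^+(K_{p,q})$ exceeds $pq/2$. Two minor clean-ups: since $\gcd(p,q)=1$ one of $p,q$ is odd and $(p-1)(q-1)/2$ is always an integer (so the half-integer worry never arises), and from $V_i(K_{p,q})>0$ for all integers $i\le pq/2$ you get directly $\nu^+(K_{p,q})\ge \lfloor pq/2\rfloor+1>pq/2$, without passing through the weaker bound $\nu^+>pq/2-1$.
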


Theorem \ref{main} are useful when one aims to determine the $4$-ball genus of some cable knots (e.g. Corollary \ref{fourball}).

\begin{proof}[Proof of Corollary \ref{fourball}]
The $4$-ball genus of the cable knot $$g_4(K_{p,q})\leq pg_4(K)+\frac{(p-1)(q-1)}{2}$$ since on can construct a slice surface for $K_{p,q}$ from $p$ parallel copies of a slice surface for $K$ together with $(p-1)q$ half-twisted bands. Using Theorem \ref{main}, we see $$pn+\frac{(p-1)(q-1)}{2} =\nu^+(K_{p,q})\leq g_4(K_{p,q})\leq pn+\frac{(p-1)(q-1)}{2}, $$ from which Corollary \ref{fourball} follows immediately.

\end{proof}

\end{document}